\newenvironment{cyr}
  {\begingroup\fontencoding{T2A}\fontfamily{cmr}\fontseries{m}\fontshape{n}\selectfont}
  {\endgroup}
\newcommand{\Extra}[1]{}
\newtheorem{theorem}{Theorem}
\newtheorem{proposition}[theorem]{Proposition}
\newtheorem{corollary}[theorem]{Corollary}
\theoremstyle{definition}
\theoremstyle{remark}
\newtheorem{remark}[theorem]{Remark}
\newcommand{\dd}{\,\mathrm{d}}
\renewcommand{\d}{\mathrm{d}}
\DeclareMathOperator{\infproj}{proj^{\inf}_{\Omega}}
\DeclareMathOperator{\supproj}{proj^{\sup}_{\Omega}}
\DeclareMathOperator{\conf}{conf}
\newcommand{\FFF}{\mathcal{F}}
\newcommand{\PPP}{\mathcal{P}}
\newcommand{\EEE}{\mathcal{E}}
\newcommand{\bin}{\mathrm{bin}}
\newcommand{\EBern}{\mathcal{E}_\mathrm{Bern}}
\newcommand{\Eiid}{\mathcal{E}_\mathrm{iid}}
\newcommand{\Eexch}{\mathcal{E}_\mathrm{exch}}
\newcommand{\Ebin}{\mathcal{E}_\mathrm{bin}}
\newcommand{\Econf}{\mathcal{E}_\mathrm{conf}}
\newcommand{\Esin}{\mathcal{E}_\mathrm{sin}}
\newcommand{\PBern}{\mathcal{P}_\mathrm{Bern}}
\newcommand{\Pexch}{\mathcal{P}_\mathrm{exch}}
\newcommand{\Pbin}{\mathcal{P}_\mathrm{bin}}
\newcommand{\Psin}{\mathcal{P}_\mathrm{sin}}
\title{Non-algorithmic theory of randomness}
\author{Vladimir Vovk}
\begin{document}
\maketitle

\begin{abstract}
  This paper proposes an alternative language
  for expressing results of the algorithmic theory of randomness.
  The language is more precise in that it does not involve unspecified additive or multiplicative constants,
  making mathematical results, in principle, applicable in practice.
  Our main testing ground for the proposed language is the problem of defining Bernoulli sequences,
  which was of great interest to Andrei Kolmogorov and his students.

   The version of this paper at \url{http://alrw.net} (Working Paper 25)
   is updated most often.
\end{abstract}

\section{Introduction}

There has been a great deal of criticism of the notion of p-value lately,
and in particular,
Glenn Shafer \cite{Shafer:arXiv1903} defended the use of betting scores instead.
This paper refers to betting scores as e-values
and demonstrates their advantages by establishing results that become much more precise
when they are stated in terms of e-values instead of p-values.

Both p-values and e-values have been used, albeit somewhat implicitly,
in the algorithmic theory of randomness:
Martin-L\"of's tests of algorithmic randomness \cite{Martin-Lof:1966} are an algorithmic version of p-functions
(i.e., functions producing p-values \cite{Gurevich/Vovk:2019COPA})
while Levin's tests of algorithmic randomness \cite{Levin:1976-local,Gacs:2005} are an algorithmic version of e-functions
(this is the term we will use in this paper for functions producing e-values).
Levin's tests are a natural modification of Martin-L\"of's tests
leading to simpler mathematical results;
similarly, many mathematical results stated in terms of p-values
become simpler when stated in terms of e-values.

The algorithmic theory of randomness is a powerful source of intuition,
but strictly speaking, its results are not applicable in practice
since they always involve unspecified additive or multiplicative constants.
The goal of this paper is to explore ways of obtaining results that are more precise;
in particular, results that may be applicable in practice.
The price to pay is that our results may involve more quantifiers
(usually hidden in our notation)
and, therefore, their statements may at first appear less intuitive.

In Section~\ref{sec:simple} we define p-functions and e-functions
in the context of testing simple statistical hypotheses,
explore relations between them,
and explain the intuition behind them.
In Section~\ref{sec:composite} we generalize these definitions, results, and explanations
to testing composite statistical hypotheses.

Section~\ref{sec:Bayes} is devoted to testing in Bayesian statistics
and gives non-algorithmic results that are particularly clean and intuitive.
They will be used as technical tools later in the paper.
In Section~\ref{sec:para-Bayesian} these results are slightly extended
and then applied to clarifying the difference
between statistical randomness and exchangeability.
(In this paper we use ``statistical randomness''
to refer to being produced by an IID probability measure;
there will always be either ``algorithmic'' or ``statistical''
standing next to ``randomness''
in order to distinguish between the two meanings.)

Section~\ref{sec:Kolmogorov} explores the question of defining Bernoulli sequences,
which was of great interest to Kolmogorov \cite{Kolmogorov:1968},
Martin-L\"of \cite{Martin-Lof:1966}, and Kolmogorov's other students.
Kolmogorov defined Bernoulli sequences as exchangeable sequences,
but we will see that another natural definition is narrower than exchangeability.

Kolmogorov paid particular attention to algorithmic randomness
w.r.\ to uniform probability measures on finite sets.
On one hand, he believed that his notion of algorithmic randomness in this context ``can be regarded as definitive''
\cite{Kolmogorov:1983LNM-local},
and on the other hand, he never seriously suggested any generalizations of this notion
(and never endorsed generalizations proposed by his students).
In Section~\ref{sec:Kolmogorov} we state a simple result in this direction
that characterizes the difference between Bernoulliness and exchangeability.

In Sections~\ref{sec:Bayes} and~\ref{sec:Kolmogorov}
we state our results first in terms of e-functions and then p-functions.
Results in terms of e-functions are always simpler and cleaner,
supporting Glenn Shafer's recommendation in \cite{Shafer:arXiv1903}
to use betting scores more widely.

\begin{remark}
  There is no standard terminology for what we call e-values and e-functions.
  In addition to Shafer's use of ``betting scores'' for our e-values,
  \begin{itemize}
  \item
    Gr\"unwald et al.\ \cite{Grunwald/etal:arXiv1906} refer to e-values as ``s-values''
    (``s'' for ``safe''),
  \item
    and Gammerman and Vovk \cite{Gammerman/Vovk:2003} refer to the inverses of e-values as ``i-values''
    (``i'' for ``integral'').
  \end{itemize}
\end{remark}

No formal knowledge of the algorithmic theory of randomness will be assumed
in this paper;
the reader can safely ignore all comparisons
between our results and results of the algorithmic theory of randomness.

\subsection*{Notation}

Our notation will be mostly standard or defined at the point where it is first used.
If $\FFF$ is a class of $[0,\infty]$-valued functions on some set $\Omega$
and $g:[0,\infty]\to[0,\infty]$ is a function,
we let $g(\FFF)$ stand for the set of all compositions $g(f)=g\circ f$, $f\in\FFF$
(i.e, $g$ is applied to $\FFF$ element-wise).
We will also use obvious modifications of this definition:
e.g., $0.5\FFF^{-0.5}$ would be interpreted as $g(\FFF)$,
where $g(u):=0.5 u^{-0.5}$ for $u\in[0,\infty]$.

\section{Testing simple statistical hypotheses}
\label{sec:simple}

Let $P$ be a probability measure on a measurable space $\Omega$.
A \emph{p-function} \cite{Gurevich/Vovk:2019COPA} is a measurable function $f:\Omega\to[0,1]$ such that,
for any $\epsilon>0$, $P\{f\le\epsilon\} \le \epsilon$.
An \emph{e-function} is a measurable function $f:\Omega\to[0,\infty]$ such that
$\int f \dd P\le1$.
(E-functions have been promoted in \cite{Shafer:arXiv1903}, \cite{Grunwald/etal:arXiv1906},
and \cite[Section~11.5]{Shafer/Vovk:2019},
but using different terminology.)

Let $\PPP_P$ be the class of all p-functions and $\EEE_P$ be the class of all e-functions,
where the underlying measure $P$ is shown as subscript.
We can define p-values and e-values as values taken by p-functions and e-functions,
respectively.
The intuition behind p-values and e-values will be discussed later in this section.

The following is an algorithm-free version of the standard relation
(see, e.g., \cite[Lemma 4.3.5]{Li/Vitanyi:2008})
between Martin-L\"of's and Levin's algorithmic notions of randomness deficiency.
\begin{proposition}\label{prop:p-vs-e}
  For any probability measure $P$ and $\kappa\in(0,1)$,
  \begin{equation}\label{eq:p-vs-e}
    \kappa \PPP_P^{\kappa-1} \subseteq \EEE_P \subseteq \PPP_P^{-1}.
  \end{equation}
\end{proposition}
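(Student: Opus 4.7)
My plan is to treat the two inclusions separately, since the right-hand one is essentially immediate from Markov's inequality while the left-hand one requires a more delicate integration (or, equivalently, a stochastic-dominance argument). For the right inclusion $\EEE_P\subseteq\PPP_P^{-1}$, I would take any $e\in\EEE_P$ and define $p:=1/e$ (with the conventions $1/0:=\infty$ and $1/\infty:=0$). For every $\epsilon>0$ the event $\{p\le\epsilon\}$ coincides with $\{e\ge 1/\epsilon\}$, and Markov's inequality gives $P\{e\ge 1/\epsilon\}\le\epsilon\int e\dd P\le\epsilon$. Hence $p$ is a p-function with $1/p=e$, so $e\in\PPP_P^{-1}$.

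For the left inclusion $\kappa\PPP_P^{\kappa-1}\subseteq\EEE_P$, I would fix $p\in\PPP_P$ and verify $\int\kappa p^{\kappa-1}\dd P\le 1$. The key observation is that the defining inequality $P\{p\le\epsilon\}\le\epsilon$ says exactly that $p$ stochastically dominates a uniform random variable on $[0,1]$. Since $\kappa-1<0$, the map $g(x):=\kappa x^{\kappa-1}$ is non-increasing on $(0,1]$, and stochastic dominance applied to this non-increasing nonnegative test function yields $\int\kappa p^{\kappa-1}\dd P \le \int_0^1 \kappa u^{\kappa-1}\dd u = 1$. A minor technicality is that $P\{p=0\}=0$, which follows on letting $\epsilon\to 0$ in the p-function bound, so $p^{\kappa-1}$ is well-defined almost surely.

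The main obstacle is really just getting the constants right in the left inclusion: the integrand $\kappa p^{\kappa-1}$ blows up as $p\to 0$, and the prefactor $\kappa$ is precisely what makes the bound tight. A reader uneasy with the stochastic-dominance step can perform the same calculation directly by the layer-cake formula, writing $\int\kappa p^{\kappa-1}\dd P = \int_0^\infty P\{p<(s/\kappa)^{1/(\kappa-1)}\}\dd s \le \int_0^\infty \min(1,(s/\kappa)^{1/(\kappa-1)})\dd s$, and then splitting at $s=\kappa$ into $\int_0^\kappa 1\dd s = \kappa$ and $\int_\kappa^\infty (s/\kappa)^{1/(\kappa-1)}\dd s = 1-\kappa$, totalling $1$.
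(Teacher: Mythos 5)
Your proof is correct and follows the same approach as the paper: the right inclusion is the identical Markov-inequality argument, and the left inclusion rests on the same computation $\int_0^1 \kappa u^{\kappa-1}\dd u=1$ that the paper points to when explaining the constant $\kappa$. The only difference is that the paper delegates the left inclusion to a citation (Shafer and Vovk, Section~11.5), whereas you supply the full stochastic-dominance/layer-cake argument yourself; your version is self-contained and the calculation (splitting at $s=\kappa$ to get $\kappa+(1-\kappa)=1$, and noting $P\{p=0\}=0$) checks out.
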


\begin{proof}
  The right inclusion in~\eqref{eq:p-vs-e} follows from the Markov inequality:
  if $f$ is an e-function,
  \[
    P\{f^{-1}\le\epsilon\}
    =
    P\{f\ge1/\epsilon\}
    \le
    \epsilon.
  \]

  The left inclusion in~\eqref{eq:p-vs-e} follows from \cite[Section~11.5]{Shafer/Vovk:2019}.
  The value of the constant in front of the $\PPP_P^{\kappa-1}$ on the left-hand side of~\eqref{eq:p-vs-e}
  follows from $\int_0^1 p^{\kappa-1}\dd p=1/\kappa$.
\end{proof}

Both p-functions and e-functions can be used for testing statistical hypotheses.
In this section we only discuss \emph{simple statistical hypotheses},
i.e., probability measures.
Observing a large e-value or a small p-value w.r.\ to a simple statistical hypothesis $P$
entitles us to rejecting $P$ as the source of the observed data,
provided the e-function or p-function were chosen in advance.
The e-value can be interpreted as the amount of evidence against $P$
found by our chosen e-function.
Similarly, the p-value reflects the amount of evidence against $P$ on a different scale;
small p-values reflect a large amount of evidence against $P$.

\begin{remark}\label{rem:assumptions}
  Proposition~\ref{prop:p-vs-e} tells us that using p-values and using e-values
  are equivalent, on a rather crude scale.
  Roughly, a p-value of $p$ corresponds to an e-value of $1/p$.
  The right inclusion in~\eqref{prop:p-vs-e} says that any way of producing e-values $e$
  can be translated into a way of producing p-values $1/e$.
  On the other hand,
  the left inclusion in~\eqref{prop:p-vs-e} says that any way of producing p-values $p$
  can be translated into a way of producing e-values $\kappa p^{\kappa-1}\approx1/p$,
  where the ``$\approx$'' assumes that we are interested in the asymptotics as $p\to0$,
  $\kappa>0$ is small,
  and we ignore positive constant factors (as customary in the algorithmic theory of randomness).
\end{remark}

\begin{remark}\label{rem:p-vs-e}
  Proposition~\ref{prop:p-vs-e} can be greatly strengthened,
  under the assumptions of Remark~\ref{rem:assumptions}.
  For example, we can replace \eqref{eq:p-vs-e} by
  \begin{equation*}
    H_\kappa(\PPP_P) \subseteq \EEE_P \subseteq \PPP_P^{-1},
  \end{equation*}
  where
  \begin{equation}\label{eq:H}
    H_\kappa(v)
    :=
    \begin{cases}
      \infty & \text{if $v=0$}\\
      \kappa (1+\kappa)^\kappa v^{-1} (-\ln v)^{-1-\kappa} & \text{if $v\in(0,e^{-1-\kappa}]$}\\
      0 & \text{if $v\in(e^{-1-\kappa},1]$}
    \end{cases}
  \end{equation}
  and $\kappa\in(0,\infty)$
  (see \cite[Section~11.1]{Shafer/Vovk:2019}).
  The value of the coefficient $\kappa (1+\kappa)^\kappa$ in \eqref{eq:H} follows from
  \[
    \int_0^{e^{-1-\kappa}}
    v^{-1} (-\ln v)^{-1-\kappa}
    \dd v
    =
    \frac{1}{\kappa(1+\kappa)^\kappa}.
  \]
\end{remark}

\section{Testing composite statistical hypotheses}
\label{sec:composite}

Let $\Omega$ be a measurable space,
which we will refer to as our \emph{sample space},
and $\Theta$ be another measurable space (our \emph{parameter space}).
We say that $P=(P_{\theta}\mid\theta\in\Theta)$ is a \emph{statistical model}
on $\Omega$
if $P$ is a Markov kernel with source $\Theta$ and target $\Omega$:
each $P_\theta$ is a probability measure on $\Omega$,
and for each measurable $A\subseteq\Omega$,
the function $P_{\theta}(A)$ of $\theta\in\Theta$ is measurable.

The notions of an e-function and a p-function each split in two.
We are usually really interested only in the outcome $\omega$,
while the parameter $\theta$ is an auxiliary modelling tool.
This motivates the following pair of simpler definitions.
A measurable function $f:\Omega\to[0,\infty]$
is an \emph{e-function} w.r.\ to the statistical model $P$
(which is our \emph{composite statistical hypothesis} in this context)
if
\[
  \forall\theta\in\Theta:
  \int_{\Omega} f(\omega) P_{\theta}(\d\omega) \le 1.
\]
In other words, if $P^*(f)\le1$,
where $P^*$ is the upper envelope
\begin{equation}\label{eq:upper}
  P^*(f)
  :=
  \sup_{\theta\in\Theta}
  \int f(\omega) P_{\theta}(\d\omega)
\end{equation}
(in Bourbaki's \cite[IX.1.1]{Bourbaki:integration} terminology,
$P^*$ is an encumbrance provided the integral in \eqref{eq:upper}
is understood as the upper integral).
Similarly, a measurable function $f:\Omega\to[0,1]$
is a \emph{p-function} w.r.\ to the statistical model $P$ if,
for any $\epsilon>0$,
\[
  \forall\theta\in\Theta:
  P_{\theta}\{\omega\in\Omega\mid f(\omega)\le\epsilon\} \le \epsilon.
\]
In other words, if, for any $\epsilon>0$, $P^*(1_{\{f\le\epsilon\}})\le\epsilon$.

Let $\EEE_P$ be the class of all e-functions w.r.\ to the statistical model $P$,
and $\PPP_P$ be the class of all p-functions w.r.\ to $P$.
We can easily generalize Proposition~\ref{prop:p-vs-e}
(the proof stays the same).

\begin{proposition}\label{prop:p-vs-e-stat}
  For any statistical model $P$ and $\kappa\in(0,1)$,
  \begin{equation*}
    \kappa \PPP_P^{\kappa-1} \subseteq \EEE_P \subseteq \PPP_P^{-1}.
  \end{equation*}
\end{proposition}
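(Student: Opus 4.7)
The plan is to observe that both the e-function and p-function properties for a statistical model $P=(P_\theta\mid\theta\in\Theta)$ are, by definition, the requirement that the corresponding simple-hypothesis property hold uniformly over $\theta\in\Theta$. Hence the proof reduces pointwise in $\theta$ to the argument already given for Proposition~\ref{prop:p-vs-e}, and the two inclusions can be established by copying that argument verbatim, just keeping track of the quantifier over $\theta$.

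For the right inclusion $\EEE_P\subseteq\PPP_P^{-1}$, fix $f\in\EEE_P$. Then for every $\theta\in\Theta$ we have $\int f\dd P_\theta\le1$, so by Markov's inequality
\[
  P_\theta\{f^{-1}\le\epsilon\}
  =
  P_\theta\{f\ge 1/\epsilon\}
  \le
  \epsilon \int f\dd P_\theta
  \le
  \epsilon
\]
for every $\epsilon>0$ and every $\theta$. By the definition of a p-function w.r.\ to $P$, this says $f^{-1}\in\PPP_P$.

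For the left inclusion $\kappa\PPP_P^{\kappa-1}\subseteq\EEE_P$, fix $f\in\PPP_P$ and consider the function $g:u\mapsto\kappa u^{\kappa-1}$, which is non-increasing on $(0,1]$ since $\kappa\in(0,1)$. The definition of a p-function says that under each $P_\theta$ the random variable $f$ is stochastically at least as large as a uniform variable on $[0,1]$; since $g$ is non-increasing, $g(f)$ is stochastically dominated under $P_\theta$ by $g(U)$ with $U$ uniform on $[0,1]$. Therefore
\[
  \int \kappa f^{\kappa-1}\dd P_\theta
  \le
  \int_0^1 \kappa u^{\kappa-1}\dd u
  =
  1
\]
for every $\theta\in\Theta$, so $\kappa f^{\kappa-1}\in\EEE_P$.

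There is no real obstacle: the only thing to notice is that the upper envelope $P^*$ introduced before~\eqref{eq:upper} makes both properties ``for every $\theta$'' versions of their simple-hypothesis counterparts, so Markov's inequality and the stochastic-domination computation from the proof of Proposition~\ref{prop:p-vs-e} go through $\theta$-by-$\theta$ without any change, and the constant $\kappa$ in front of $\PPP_P^{\kappa-1}$ is explained exactly as before by $\int_0^1 p^{\kappa-1}\dd p=1/\kappa$.
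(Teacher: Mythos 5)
Your proof is correct and takes essentially the same route as the paper, which simply remarks that the proof of Proposition~\ref{prop:p-vs-e} ``stays the same'' when the conditions are imposed uniformly over $\theta\in\Theta$: Markov's inequality for the right inclusion and the calibrator $u\mapsto\kappa u^{\kappa-1}$ with $\int_0^1\kappa u^{\kappa-1}\dd u=1$ for the left. The only difference is that you spell out the stochastic-domination computation for the left inclusion, which the paper delegates to a citation of \cite[Section~11.5]{Shafer/Vovk:2019}; your version of that step is sound.
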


For $f\in\EEE_P$, we regard the e-value $f(\omega)$
as the amount of evidence against the statistical model $P$
found by $f$ (which must be chosen in advance) when the outcome is $\omega$.
The interpretation of p-values is similar.

In some case we would like to take the parameter $\theta$ into account
more seriously.
A measurable function $f:\Omega\times\Theta\to[0,\infty]$
is a \emph{conditional e-function} w.r.\ to the statistical model $P$ if
\[
  \forall\theta\in\Theta:
  \int_{\Omega} f(\omega;\theta) P_{\theta}(\d\omega) \le 1.
\]
Let $\bar\EEE_P$ be the class of all such functions.
And a measurable function $f:\Omega\times\Theta\to[0,1]$
is a \emph{conditional p-function} w.r.\ to $P$ if
\[
  \forall\epsilon>0\:
  \forall\theta\in\Theta:
   P_{\theta}
   \left\{
     \omega\in\Omega
     \mid
     f(\omega;\theta) \le \epsilon
   \right\}
   \le
   \epsilon.
\]
Let $\bar\PPP_P$ be the class of all such functions.

We can embed $\EEE_P$ (resp.\ $\PPP_P$)
into $\bar\EEE_P$ (resp.\ $\bar\PPP_P$)
by identifying a function $f$ on domain $\Omega$
with the function $f'$ on domain $\Omega\times\Theta$
that does not depend on $\theta\in\Theta$,
$f'(\omega;\theta):=f(\omega)$.

For $f\in\bar\EEE_P$, we can regard $f(\omega;\theta)$
as the amount of evidence against the specific probability measure $P_\theta$
in the statistical model $P$ found by $f$ when the outcome is $\omega$.

We can generalize Proposition~\ref{prop:p-vs-e-stat} further as follows.

\begin{proposition}\label{prop:p-vs-e-bar}
  For any statistical model $P$ and $\kappa\in(0,1)$,
  \begin{equation}\label{eq:p-vs-e-bar}
    \kappa \bar\PPP_P^{\kappa-1} \subseteq \bar\EEE_P \subseteq \bar\PPP_P^{-1}.
  \end{equation}
\end{proposition}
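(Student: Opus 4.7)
The plan is to mimic the proof of Proposition~\ref{prop:p-vs-e} pointwise in the parameter $\theta$. The key observation is that the defining conditions for $\bar\EEE_P$ and $\bar\PPP_P$ both have the form $\forall\theta\in\Theta:(\text{a condition on }f(\cdot;\theta)\text{ under }P_\theta)$. So a measurable $f:\Omega\times\Theta\to[0,\infty]$ lies in $\bar\EEE_P$ precisely when, for every fixed $\theta$, the slice $\omega\mapsto f(\omega;\theta)$ lies in $\EEE_{P_\theta}$, and analogously a measurable $f:\Omega\times\Theta\to[0,1]$ lies in $\bar\PPP_P$ precisely when each slice lies in $\PPP_{P_\theta}$. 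Both inclusions in \eqref{eq:p-vs-e-bar} will therefore be obtained by applying the already proved Proposition~\ref{prop:p-vs-e} slicewise.

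For the right inclusion I would fix $f\in\bar\EEE_P$ and apply Markov's inequality under each $P_\theta$ to the slice $f(\cdot;\theta)$: for any $\epsilon>0$,
\[
  P_\theta\{\omega\mid f(\omega;\theta)^{-1}\le\epsilon\}
  =
  P_\theta\{\omega\mid f(\omega;\theta)\ge 1/\epsilon\}
  \le
  \epsilon\int f(\omega;\theta)\,P_\theta(\d\omega)
  \le\epsilon,
\]
which shows $f^{-1}\in\bar\PPP_P$. For the left inclusion I would fix $f\in\bar\PPP_P$ and verify, for each $\theta$, that $\int \kappa f(\omega;\theta)^{\kappa-1}\,P_\theta(\d\omega)\le 1$. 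Writing $F_\theta$ for the distribution function of $f(\cdot;\theta)$ under $P_\theta$, the definition of a conditional p-function gives $F_\theta(\epsilon)\le\epsilon$, so $f(\cdot;\theta)$ stochastically dominates the uniform law on $[0,1]$. Since $u\mapsto\kappa u^{\kappa-1}$ is decreasing on $(0,1]$ (because $\kappa\in(0,1)$), the expectation is bounded by $\int_0^1 \kappa u^{\kappa-1}\,\d u=1$; equivalently, a layer-cake/change-of-variables computation reduces matters to the identity $\int_0^1 p^{\kappa-1}\,\d p=1/\kappa$ already invoked in the proof of Proposition~\ref{prop:p-vs-e}.

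The only thing to check beyond the slicewise argument is joint measurability of the transformed functions on $\Omega\times\Theta$, but $(\omega,\theta)\mapsto f(\omega;\theta)^{-1}$ and $(\omega,\theta)\mapsto \kappa f(\omega;\theta)^{\kappa-1}$ are compositions of $f$ with a Borel function $[0,\infty]\to[0,\infty]$, so they are automatically jointly measurable. I do not expect any real obstacle: the proposition is essentially a ``parametrized'' restatement of Proposition~\ref{prop:p-vs-e-stat}, which is in turn the same proof as Proposition~\ref{prop:p-vs-e}, and the composite/conditional distinction collapses once one works one slice at a time.
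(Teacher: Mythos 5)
Your slicewise reduction is exactly what the paper intends: it proves this proposition by remarking that the proof of Proposition~\ref{prop:p-vs-e} ``stays the same,'' i.e.\ Markov's inequality under each $P_\theta$ for the right inclusion and the computation $\int_0^1 \kappa p^{\kappa-1}\dd p=1$ (via stochastic dominance of the uniform law, which the paper delegates to a citation but you spell out) for the left. Your proposal is correct and matches the paper's approach.
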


Remarks~\ref{rem:assumptions} and~\ref{rem:p-vs-e} are also applicable
in the context of Propositions~\ref{prop:p-vs-e-stat} and~\ref{prop:p-vs-e-bar}.

\section{The validity of Bayesian statistics}
\label{sec:Bayes}

In this section we establish the validity of Bayesian statistics in our framework,
mainly as a sanity check.
We will translate the results in \cite{Vovk/Vyugin:1993},
which are stated in terms of the algorithmic theory of randomness,
to our algorithm-free setting.
It is interesting that the proofs simplify radically,
and become almost obvious.
(And remarkably, one statement also simplifies.)

Let $P=(P_{\theta}\mid\theta\in\Theta)$ be a statistical model,
as in the previous section,
and $Q$ be a probability measure on the parameter space $\Theta$.
Together, $P$ and $Q$ form a \emph{Bayesian model},
and $Q$ is known as the \emph{prior measure} in this context.

The joint probability measure $T$ on the measurable space $\Omega\times\Theta$ is defined by
\[
  T(A\times B)
  :=
  \int_B P_{\theta}(A) Q(\d\theta),
\]
for all measurable $A\subseteq\Omega$ and $B\subseteq\Theta$.
Let $Y$ be the marginal distribution of $T$ on $\Omega$:
for any measurable $A\subseteq\Omega$, $Y(A):=T(A\times\Theta)$.

The \emph{product} $\bar\EEE_P\EEE_Q$ of $\bar\EEE_P$ and $\EEE_Q$ is defined
as the class of all measurable functions $f:\Omega\times\Theta\to[0,\infty]$ such that,
for some $g\in\bar\EEE_P$ and $h\in\EEE_Q$,
\begin{equation}\label{eq:product}
  f(\omega,\theta)
  =
  g(\omega;\theta)
  h(\theta)
  \quad
  \text{$T$-a.s.}
\end{equation}
Such $f$ can be regarded as ways of finding evidence against $(\omega,\theta)$
being produced by the Bayesian model $(P,Q)$:
to have evidence against $(\omega,\theta)$ being produced by $(P,Q)$
we need to have evidence against $\theta$ being produced by the prior measure $Q$
or evidence against $\omega$ being produced by $P_{\theta}$;
we combine the last two amounts of evidence by multiplying them.
The following proposition tells us that this product
is precisely the amount of evidence against $T$
found by a suitable e-function.

\begin{proposition}\label{prop:VV-1}
  If $(P_\theta\mid\theta\in\Theta)$ is a statistical model with a prior probability measure $Q$ on $\Theta$,
  and $T$ is the joint probability measure on $\Omega\times\Theta$,
  then
  \begin{equation}\label{eq:VV-1}
    \EEE_T
    =
    \bar\EEE_P\EEE_Q.
  \end{equation}
\end{proposition}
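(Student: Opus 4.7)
My plan is to prove the two inclusions in \eqref{eq:VV-1} separately, with the forward inclusion $\bar\EEE_P\EEE_Q\subseteq\EEE_T$ being a direct Fubini/Tonelli computation, and the reverse inclusion $\EEE_T\subseteq\bar\EEE_P\EEE_Q$ handled by an explicit disintegration of a given joint e-function.

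For $\bar\EEE_P\EEE_Q\subseteq\EEE_T$, I would take $g\in\bar\EEE_P$ and $h\in\EEE_Q$ and compute, using the definition of $T$ and Tonelli,
\[
  \int_{\Omega\times\Theta} g(\omega;\theta)h(\theta) \, T(\d\omega,\d\theta)
  =
  \int_{\Theta} h(\theta) \left(\int_{\Omega} g(\omega;\theta) \, P_\theta(\d\omega)\right) Q(\d\theta)
  \le \int_{\Theta} h(\theta) \, Q(\d\theta) \le 1,
\]
where the penultimate inequality uses $g\in\bar\EEE_P$ and the final one uses $h\in\EEE_Q$. Hence any product $gh$ lies in $\EEE_T$, and since this property is preserved under $T$-a.s.\ modification, the inclusion follows.

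For the reverse inclusion, let $f\in\EEE_T$ and define $h(\theta):=\int_\Omega f(\omega,\theta)\,P_\theta(\d\omega)$. Tonelli and the Markov-kernel property make $h$ measurable, and $\int h\dd Q = \int f\dd T \le 1$, so $h\in\EEE_Q$. Now put
\[
  g(\omega;\theta)
  :=
  \begin{cases}
    f(\omega,\theta)/h(\theta) & \text{if } h(\theta)>0,\\
    0 & \text{if } h(\theta)=0.
  \end{cases}
\]
Then $g$ is measurable, and for each $\theta\in\Theta$ one has $\int_\Omega g(\omega;\theta)\,P_\theta(\d\omega)$ equal to $1$ when $h(\theta)>0$ and equal to $0$ when $h(\theta)=0$; in particular $g\in\bar\EEE_P$. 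Finally, when $h(\theta)>0$ we have $g(\omega;\theta)h(\theta)=f(\omega,\theta)$ identically, while the set $\{\theta:h(\theta)=0\}$ has $T$-measure $\int 1_{\{h=0\}}h\dd Q=0$ of outcomes $(\omega,\theta)$ for which $f$ can differ from $g\cdot h$; thus $f=g\cdot h$ holds $T$-a.s., as required by \eqref{eq:product}.

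The only subtle point is the treatment of the null set $\{h=0\}$: I have to choose $g$ there so that it remains measurable and still satisfies $\int g(\omega;\theta)\,P_\theta(\d\omega)\le1$ for \emph{every} $\theta$ (not merely $Q$-a.e.\ $\theta$), because membership in $\bar\EEE_P$ is an everywhere condition. Setting $g:=0$ on this set sidesteps the issue and, by Tonelli applied to the nonnegative function $f\cdot 1_{\{h=0\}}$, costs nothing $T$-a.s. This is the step I expect to be the main (and essentially the only) obstacle; everything else reduces to two applications of Tonelli.
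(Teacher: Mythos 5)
Your proof is correct and takes essentially the same route as the paper's: the paper deduces Proposition~\ref{prop:VV-1} from the more general Theorem~\ref{thm:VV} on para-Bayesian models, whose proof is exactly your Tonelli computation for the inclusion $\bar\EEE_P\EEE_Q\subseteq\EEE_T$ and the disintegration $h(\theta):=\int f(\omega,\theta)\,P_\theta(\d\omega)$, $g:=f/h$ for the converse. Your explicit zeroing of $g$ on $\{h=0\}$ and the Tonelli argument showing that this set carries no mass of $f$ under $T$ is just a slightly more careful rendering of the paper's convention $0/0:=0$.
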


Proposition~\ref{prop:VV-1} will be deduced from Theorem~\ref{thm:VV}
in Section~\ref{sec:para-Bayesian}.
It is the analogue of Theorem~1 in \cite{Vovk/Vyugin:1993},
which says, in the terminology of that paper,
that the level of impossibility of a pair $(\theta,\omega)$ w.r.\ to the joint probability measure $T$
is the product of the level of impossibility of $\theta$ w.r.\ to the prior measure $Q$
and the level of impossibility of $\omega$ w.r.\ to the probability measure $P_{\theta}$.
In an important respect, however,
Proposition~\ref{prop:VV-1} is simpler than Theorem~1 in \cite{Vovk/Vyugin:1993}:
in the latter, the level of impossibility of $\omega$ w.r.\ to $P_{\theta}$
has to be conditional on the level of impossibility of $\theta$ w.r.\ to $Q$,
whereas in the former there is no such conditioning.
Besides, Proposition~\ref{prop:VV-1} is more precise:
it does not involve any constant factors (specified or otherwise).

\begin{remark}
  The non-algorithmic formula \eqref{eq:VV-1} being simpler
  than its counterpart in the algorithmic theory of randomness
  is analogous to the non-algorithmic formula $H(x,y)=H(x)+H(y\mid x)$
  being simpler than its counterpart $K(x,y)=K(x)+K(y\mid x,K(x))$
  in the algorithmic theory of complexity,
  $H$ being entropy and $K$ being prefix complexity.
  The fact that $K(x,y)$ does not coincide with $K(x)+K(y\mid x)$
  to within an additive constant, $K$ being Kolmogorov complexity,
  was surprising to Kolmogorov and wasn't noticed for several years
  \cite{Kolmogorov:1968-talk,Kolmogorov:1968}.
\end{remark}

The \emph{inf-projection} onto $\Omega$ of an e-function $f\in\EEE_T$ w.r.\ to $T$
is the function $(\infproj f):\Omega\to[0,\infty]$ defined by
\[
  \left(
    \infproj f
  \right)
  (\omega)
  :=
  \inf_{\theta\in\Theta}
  f(\omega,\theta).
\]
Intuitively, $\infproj f$ regards $\omega$ as typical under the model
if it can be extended to a typical $(\omega,\theta)$ for at least one $\theta$.
Let $\infproj\EEE_T$ be the set of all such inf-projections.

The results in the rest of this section become simpler
if the definitions of classes $\EEE$ and $\PPP$ are modified slightly:
we drop the condition of measurability on their elements
and replace all integrals by upper integrals
and all measures by outer measures.
We will use the modified definitions only in the rest of this section
(we could have used them in the whole of this paper,
but they become particularly useful here since projections of measurable functions
do not have to be measurable \cite{Suslin:1917}).

\begin{proposition}\label{prop:VV-2}
  If $T$ is a probability measure on $\Omega\times\Theta$
  and $Y$ is its marginal distribution on $\Omega$,
  \begin{equation}\label{eq:VV-2}
    \EEE_Y
    =
    \infproj\EEE_T.
  \end{equation}
\end{proposition}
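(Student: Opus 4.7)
The proof has two inclusions, and the technical heart is an upper-integral identity. For any function $h:\Omega\to[0,\infty]$, write $\bar h:\Omega\times\Theta\to[0,\infty]$ for its trivial extension $\bar h(\omega,\theta):=h(\omega)$. The central lemma I would establish is
\[
  \int\nolimits^{*} h \dd Y \;=\; \int\nolimits^{*} \bar h \dd T,
\]
where the asterisks denote upper integrals in the sense of Bourbaki.

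The inclusion $\EEE_Y \subseteq \infproj \EEE_T$ uses only the easy inequality $\int^{*} \bar h \dd T \le \int^{*} h \dd Y$ of the lemma. Given $f \in \EEE_Y$, any measurable dominator $v \ge f$ on $\Omega$ lifts to a measurable dominator $\bar v \ge \bar f$ on $\Omega\times\Theta$ with $\int \bar v \dd T = \int v \dd Y$, since $Y$ is the marginal of $T$ on $\Omega$. Taking the infimum over $v$ shows $\bar f \in \EEE_T$, and because $\bar f$ is constant in $\theta$ we have $\infproj \bar f = f$.

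For the reverse inclusion $\infproj\EEE_T \subseteq \EEE_Y$, I would take $g \in \EEE_T$ and set $h := \infproj g$. Since $\bar h \le g$ pointwise on $\Omega\times\Theta$, monotonicity of the upper integral gives $\int^{*} \bar h \dd T \le \int^{*} g \dd T \le 1$, and the hard inequality $\int^{*} h \dd Y \le \int^{*} \bar h \dd T$ of the central lemma then delivers $h \in \EEE_Y$.

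The main obstacle is the hard direction of the lemma, where measurability is fragile: the inf-projection of a Borel function on $\Omega\times\Theta$ can fail to be Borel on $\Omega$ (this is precisely the Suslin phenomenon cited by the author just above the statement), which is why the relaxation to upper integrals is indispensable here. To prove this inequality, I would invoke a disintegration $T(\d\omega,\d\theta)=Y(\d\omega)\,K(\omega,\d\theta)$ by a regular conditional probability kernel $K$, available under mild regularity of $\Theta$ (e.g., standard Borel, which is the setting implicit throughout this section). Then for any measurable $u\ge\bar h$ on $\Omega\times\Theta$, the function $v(\omega):=\int u(\omega,\theta)\,K(\omega,\d\theta)$ is measurable on $\Omega$, dominates $h$ (since $u(\omega,\theta)\ge h(\omega)$ uniformly in $\theta$), and satisfies $\int v\dd Y = \int u \dd T$ by the disintegration formula. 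Taking the infimum over $u$ closes the lemma and hence the proposition.
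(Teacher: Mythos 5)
Your proof is correct and follows the same skeleton as the paper's: the inclusion $\EEE_Y\subseteq\infproj\EEE_T$ is obtained by lifting $f$ to the $\theta$-independent $\bar f$ and noting $\infproj\bar f=f$, and the converse by bounding $\bar h\le g$ pointwise and pushing the integral down to $Y$ -- exactly as in the paper. The one real difference is the treatment of the identity $\int^{*}h\dd Y=\int^{*}\bar h\dd T$ for upper integrals, which you rightly isolate as the technical heart. The paper's proof simply writes $\int g(\omega)\,Y(\d\omega)=\int g(\omega)\,T(\d\omega,\d\theta)$ for the (possibly non-measurable) inf-projection $g$ and moves on, whereas you supply an actual argument for the non-trivial inequality $\int^{*}h\dd Y\le\int^{*}\bar h\dd T$ via a disintegration $T(\d\omega,\d\theta)=Y(\d\omega)\,K(\omega,\d\theta)$. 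That argument is sound, but it costs you a hypothesis the paper never states: the posterior kernel $K$ exists when $\Theta$ is, say, standard Borel, while the proposition is asserted for arbitrary measurable spaces. Your inequality amounts to $Y^{*}(S)\le T^{*}(S\times\Theta)$ for arbitrary $S\subseteq\Omega$, a measurable-projection-type statement that is genuinely delicate outside the analytic/standard Borel setting, so your decision to treat it as the main obstacle rather than as self-evident is a reasonable strengthening of the paper's own two-line proof -- just be explicit that you are adding a regularity assumption (or find an argument for the general case) rather than attributing it to the section's ``implicit setting''.
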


\begin{proof}
  To check the inclusion ``$\subseteq$'' in~\eqref{eq:VV-2},
  let $g\in\EEE_Y$, i.e., $\int g(\omega) Y(\d\omega) \le 1$.
  Setting $f(\omega,\theta):=g(\omega)$,
  we have $\int f(\omega,\theta) T(\d\omega,\d\theta) \le 1$
  (i.e., $f\in\EEE_T$)
  and $g$ is the inf-projection of $f$ onto $\Omega$.

  To check the inclusion ``$\supseteq$'' in~\eqref{eq:VV-2},
  let $f\in\EEE_T$ and $g:=\infproj f$.
  We then have
  \begin{equation*}
    \int g(\omega) Y(\d\omega)
    =
    \int g(\omega) T(\d\omega,\d\theta)
    \le
    \int f(\omega,\theta) T(\d\omega,\d\theta)
    \le
    1.
    \qedhere
  \end{equation*}
\end{proof}

Proposition~\ref{prop:VV-2} says that we can acquire evidence against an outcome $\omega$
being produced by the Bayesian model $(P,Q)$
if and only if we can acquire evidence against $(\omega,\theta)$
being produced by the model for all $\theta\in\Theta$.

We can combine Propositions~\ref{prop:VV-1} and~\ref{prop:VV-2} obtaining
\begin{equation*}
  \EEE_Y
  =
  \infproj
  \left(
    \bar\EEE_P\EEE_Q
  \right).
\end{equation*}
The rough interpretation is that we can acquire evidence against $\omega$ being produced by $Y$
if and only if we can, for each $\theta\in\Theta$,
acquire evidence against $\theta$ being produced by $Q$
or acquire evidence against $\omega$ being produced by $P_\theta$.

The following statements in terms of p-values are cruder,
but their interpretation is similar.

\begin{corollary}\label{cor:VV-1}
  If $\kappa\in(0,1)$ and $(P,Q)$ is a Bayesian model,
  \[
    \kappa^{-1}
    \PPP_T^{1-\kappa}
    \subseteq
    \bar\PPP_P \PPP_Q
    \subseteq
    \kappa^{\frac{2}{1-\kappa}}
    \PPP_T^{\frac{1}{1-\kappa}}.
  \]
\end{corollary}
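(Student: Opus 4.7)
The plan is to translate the equality $\EEE_T=\bar\EEE_P\EEE_Q$ from Proposition~\ref{prop:VV-1} into a statement about p-functions by means of the two-sided correspondence between p-functions and e-functions provided by Propositions~\ref{prop:p-vs-e} and~\ref{prop:p-vs-e-bar}. Both inclusions in the corollary are obtained by pushing this factorization through the correspondence; the asymmetry in the powers of $\kappa$ and of the p-values comes from the fact that the left inclusion $\kappa\PPP^{\kappa-1}\subseteq\EEE$ costs a factor of $\kappa$ each time it is used, while the right inclusion $\EEE\subseteq\PPP^{-1}$ is free.

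For the left inclusion $\kappa^{-1}\PPP_T^{1-\kappa}\subseteq\bar\PPP_P\PPP_Q$, I would start from an arbitrary $p\in\PPP_T$, convert it into the e-function $\kappa p^{\kappa-1}\in\EEE_T$ using the left inclusion of Proposition~\ref{prop:p-vs-e-stat}, factor the result as $\kappa p^{\kappa-1}=gh$ with $g\in\bar\EEE_P$ and $h\in\EEE_Q$ via Proposition~\ref{prop:VV-1}, and finally invert each factor by the right (cheap) inclusions of Propositions~\ref{prop:p-vs-e-bar} and~\ref{prop:p-vs-e} to obtain $g^{-1}\in\bar\PPP_P$ and $h^{-1}\in\PPP_Q$. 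Their product is $(gh)^{-1}=\kappa^{-1}p^{1-\kappa}$, so this element lies in $\bar\PPP_P\PPP_Q$, as required.

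For the right inclusion $\bar\PPP_P\PPP_Q\subseteq\kappa^{2/(1-\kappa)}\PPP_T^{1/(1-\kappa)}$ I would reverse the direction. Given $f=gh$ with $g\in\bar\PPP_P$ and $h\in\PPP_Q$, applying the left (expensive) inclusions of Propositions~\ref{prop:p-vs-e-bar} and~\ref{prop:p-vs-e} yields $\kappa g^{\kappa-1}\in\bar\EEE_P$ and $\kappa h^{\kappa-1}\in\EEE_Q$, whose product $\kappa^{2}f^{\kappa-1}$ lies in $\bar\EEE_P\EEE_Q=\EEE_T$ by Proposition~\ref{prop:VV-1}. The right inclusion of Proposition~\ref{prop:p-vs-e-stat} then produces $q:=\kappa^{-2}f^{1-\kappa}\in\PPP_T$, and solving this identity for $f$ gives $f=\kappa^{2/(1-\kappa)}q^{1/(1-\kappa)}$, which witnesses membership in $\kappa^{2/(1-\kappa)}\PPP_T^{1/(1-\kappa)}$.

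There is no substantial obstacle beyond careful bookkeeping: one must invoke the expensive left inclusion of the p-to-e correspondence exactly when converting a p-function into an e-function (since that is where the factor $\kappa$ accumulates) and the free right inclusion when going the other way, and then combine the two constants arising in the right inclusion into the single coefficient $\kappa^{2/(1-\kappa)}$ after solving for $f$. A minor technical point is that the inverted expressions should be interpreted as lying in $[0,1]$ (truncating if necessary), exactly as in the proof of Proposition~\ref{prop:p-vs-e}; the inequalities defining p-functions are preserved under such truncation.
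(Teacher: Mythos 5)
Your proof is correct and follows essentially the same route as the paper: both push the factorization $\EEE_T=\bar\EEE_P\EEE_Q$ of Proposition~\ref{prop:VV-1} through the p/e correspondence, using the cheap inclusion $\EEE\subseteq\PPP^{-1}$ when converting e-functions to p-functions and the $\kappa$-costly inclusion $\kappa\PPP^{\kappa-1}\subseteq\EEE$ in the other direction, which is exactly where the coefficients $\kappa^{-1}$ and $\kappa^{2/(1-\kappa)}$ arise. The only cosmetic difference is that the paper phrases the argument as a chain of set inclusions (after rewriting Propositions~\ref{prop:p-vs-e} and~\ref{prop:p-vs-e-bar} in inverted form), whereas you track an individual element through the same chain.
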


\begin{proof}
  We can rewrite \eqref{eq:p-vs-e-bar} in Proposition~\ref{prop:p-vs-e-bar} as
  \begin{equation*}
    \kappa^{-1} \bar\PPP_P^{1-\kappa} \subseteq \bar\EEE_P^{-1} \subseteq \bar\PPP_P
  \end{equation*}
  and as
  \begin{equation*}
    \bar\EEE_P^{-1} \subseteq \bar\PPP_P \subseteq \kappa^{\frac{1}{1-\kappa}} \bar\EEE_P^{-\frac{1}{1-\kappa}},
  \end{equation*}
  with similar representations for \eqref{prop:p-vs-e} in Proposition~\ref{prop:p-vs-e}
  and \eqref{prop:p-vs-e-stat} in Proposition~\ref{prop:p-vs-e-stat}.
  Therefore, by \eqref{eq:VV-1} in Proposition~\ref{prop:VV-1},
  \[
    \kappa^{-1}
    \PPP_T^{1-\kappa}
    \subseteq
    \EEE_T^{-1}
    =
    (\bar\EEE_P\EEE_Q)^{-1}
    =
    \bar\EEE_P^{-1} \EEE_Q^{-1}
    \subseteq
    \bar\PPP_P \PPP_Q
  \]
  and
  \[
    \bar\PPP_P \PPP_Q
    \subseteq
    \kappa^{\frac{2}{1-\kappa}}
    \left(
      \bar\EEE_P \EEE_Q
    \right)^{-\frac{1}{1-\kappa}}
    =
    \kappa^{\frac{2}{1-\kappa}}
    \EEE_T^{-\frac{1}{1-\kappa}}
    \subseteq
    \kappa^{\frac{2}{1-\kappa}}
    \PPP_T^{\frac{1}{1-\kappa}}.
    \qedhere
  \]
\end{proof}

\begin{corollary}\label{cor:VV-2}
  If $\kappa\in(0,1)$, $T$ is a probability measure on $\Omega\times\Theta$,
  and $Y$ is its marginal distribution on $\Omega$,
  \[
    \kappa^{-1}
    \supproj\PPP_T^{1-\kappa}
    \subseteq
    \PPP_Y
    \subseteq
    \kappa^{\frac{1}{1-\kappa}}
    \supproj\PPP_T^{\frac{1}{1-\kappa}},
  \]
  where $\supproj$ is defined similarly to $\infproj$
  (with $\sup$ in place of $\inf$).
\end{corollary}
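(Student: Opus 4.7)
The plan is to mirror the structure of Corollary~\ref{cor:VV-1}: rewrite Proposition~\ref{prop:p-vs-e} (applied to the simple hypotheses $T$ and $Y$) into the two inclusion forms that express $\PPP$ as images of $\EEE^{-1}$, then use Proposition~\ref{prop:VV-2} to pass between $\EEE_T$ and $\EEE_Y$. The only additional ingredient beyond Corollary~\ref{cor:VV-1} is the elementary identity that, for any $f:\Omega\times\Theta\to[0,\infty]$ and any $\alpha>0$,
\[
    \bigl(\infproj f\bigr)^{-\alpha}
    =
    \supproj f^{-\alpha},
\]
which holds pointwise in $\omega$ because the map $x\mapsto x^{-\alpha}$ is strictly decreasing on $[0,\infty]$ (with the usual conventions $0^{-\alpha}=\infty$, $\infty^{-\alpha}=0$). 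Since raising to a positive power is monotone, we also have $\supproj(f^\alpha)=(\supproj f)^\alpha$, so $\supproj$ commutes with pointwise power operations in the way we need.

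For the left inclusion, I would start from the rewriting $\kappa\PPP_T^{\kappa-1}\subseteq\EEE_T$ of Proposition~\ref{prop:p-vs-e}. Given $p\in\PPP_T$, set $f:=\kappa p^{\kappa-1}\in\EEE_T$; by Proposition~\ref{prop:VV-2}, $\infproj f\in\EEE_Y$, and then by the Markov half of Proposition~\ref{prop:p-vs-e} its reciprocal $(\infproj f)^{-1}$ lies in $\PPP_Y$. Applying the swap identity,
\[
    (\infproj f)^{-1}
    =
    \supproj\bigl(\kappa^{-1}p^{1-\kappa}\bigr)
    =
    \kappa^{-1}\supproj p^{1-\kappa},
\]
so $\kappa^{-1}\supproj\PPP_T^{1-\kappa}\subseteq\PPP_Y$.

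For the right inclusion, start from $\PPP_Y\subseteq\kappa^{1/(1-\kappa)}\EEE_Y^{-1/(1-\kappa)}$, which is another rearrangement of Proposition~\ref{prop:p-vs-e}. For $q\in\PPP_Y$, there is $g\in\EEE_Y$ with $q=\kappa^{1/(1-\kappa)}g^{-1/(1-\kappa)}$, and by Proposition~\ref{prop:VV-2} we may write $g=\infproj f$ for some $f\in\EEE_T$. Then $f^{-1}\in\PPP_T$ by Proposition~\ref{prop:p-vs-e}, and the swap identity gives
\[
    q
    =
    \kappa^{1/(1-\kappa)}(\infproj f)^{-1/(1-\kappa)}
    =
    \kappa^{1/(1-\kappa)}\supproj\bigl(f^{-1}\bigr)^{1/(1-\kappa)}
    \in
    \kappa^{1/(1-\kappa)}\supproj\PPP_T^{1/(1-\kappa)}.
\]

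I expect no real obstacles; the only point that requires a moment of care is that, because projections of measurable functions need not be measurable, one must work in the modified framework (without measurability, with upper integrals and outer measures) adopted earlier in Section~\ref{sec:Bayes} -- exactly the setting in which Proposition~\ref{prop:VV-2} is stated. Once that convention is in place, the argument is a purely algebraic juggling of the inclusions of Propositions~\ref{prop:p-vs-e} and~\ref{prop:VV-2}, with the inf/sup swap as the one new observation.
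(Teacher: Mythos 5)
Your proposal is correct and is essentially the paper's own proof: the paper also chains the rearranged inclusions of Proposition~\ref{prop:p-vs-e} through Proposition~\ref{prop:VV-2}, using exactly the identity $\supproj\EEE_T^{-1}=\EEE_Y^{-1}$ (your inf/sup swap) to convert $\infproj$ into $\supproj$. Your element-wise write-up and your remark about working in the measurability-free setting of Section~\ref{sec:Bayes} just make explicit what the paper leaves implicit.
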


\begin{proof}
  As in the proof of Corollary~\ref{cor:VV-1},
  we have
  \[
    \kappa^{-1}
    \supproj\PPP_T^{1-\kappa}
    \subseteq
    \supproj\EEE_T^{-1}
    =
    \EEE_Y^{-1}
    \subseteq
    \PPP_Y
  \]
  and
  \[
    \PPP_Y
    \subseteq
    \kappa^{\frac{1}{1-\kappa}}
    \EEE_Y^{-\frac{1}{1-\kappa}}
    =
    \kappa^{\frac{1}{1-\kappa}}
    \supproj\EEE_T^{-\frac{1}{1-\kappa}}
    \subseteq
    \kappa^{\frac{1}{1-\kappa}}
    \supproj\PPP_T^{\frac{1}{1-\kappa}}.
    \qedhere
  \]
\end{proof}

\section{Parametric Bayesian models}
\label{sec:para-Bayesian}

Now we generalize the notion of a Bayesian model
to that of a parametric Bayesian (or \emph{para-Bayesian}) model.
This is a pair consisting of a statistical model $(P_{\theta}\mid\theta\in\Theta)$
on a sample space $\Omega$
and a statistical model $(Q_{\pi}\mid\pi\in\Pi)$
on the sample space $\Theta$
(so that the sample space of the second statistical model
is the parameter space of the first statistical model).
Intuitively, a para-Bayesian model is the counterpart of a Bayesian model
in the situation of uncertainty about the prior:
now the prior is a parametric family of probability measures
rather than one probability measure.

The following definitions are straightforward generalizations
of the definitions for the Bayesian case.
The joint statistical model $T=(T_{\pi}\mid\pi\in\Pi)$
on the measurable space $\Omega\times\Theta$
is defined by
\begin{equation}\label{eq:joint}
  T_{\pi}(A\times B)
  :=
  \int_B P_{\theta}(A) Q_{\pi}(\d\theta),
\end{equation}
for all measurable $A\subseteq\Omega$ and $B\subseteq\Theta$.
For each $\pi\in\Pi$,
$Y_{\pi}$ is the marginal distribution of $T_{\pi}$ on $\Omega$:
for any measurable $A\subseteq\Omega$, $Y_{\pi}(A):=T_{\pi}(A\times\Theta)$.
The \emph{product} $\bar\EEE_P\EEE_Q$ of $\bar\EEE_P$ and $\EEE_Q$ is still defined
as the class of all measurable functions $f:\Omega\times\Theta\to[0,\infty]$ such that,
for some $g\in\bar\EEE_P$ and $h\in\EEE_Q$,
we have the equality in \eqref{eq:product} $T_{\pi}$-a.s., for all $\pi\in\Pi$.

\begin{remark}\label{rem:sufficient}
  Another representation of para-Bayesian models
  is as a sufficient statistic,
  as elaborated in \cite{Lauritzen:1988}:
  \begin{itemize}
  \item
    For the para-Bayesian model $(P,Q)$,
    the statistic $(\theta,\omega)\in(\Theta\times\Omega)\mapsto\theta$
    is a sufficient statistic in the statistical model
    $(T_{\pi})$
    on the product space $\Theta\times\Omega$.
  \item
    If $\theta$ is a sufficient statistic
    for a statistical model $(T_{\pi})$ on a sample space $\Omega$,
    then $(P,Q)$ is a para-Bayesian model,
    where $Q$ is the distribution of $\theta$,
    and $P_{\theta}$ are (fixed versions of) the conditional distributions given $\theta$.
  \end{itemize}
\end{remark}

\begin{remark}
  Yet another way to represent a para-Bayesian model $(P,Q)$
  is a Markov family with time horizon $3$:
  \begin{itemize}
  \item
    the initial state space is $\Pi$,
    the middle one is $\Theta$,
    and the final one is $\Omega$;
  \item
    there is no initial probability measure on $\Pi$,
    the statistical model $(Q_{\pi})$ is the first Markov kernel,
    and the statistical model $(P_{\theta})$ is the second Markov kernel.
  \end{itemize}
\end{remark}

\begin{theorem}\label{thm:VV}
  If $(P,Q)$ is a para-Bayesian model with the joint statistical model $T$
  (as defined by \eqref{eq:joint}),
  we have \eqref{eq:VV-1}.
\end{theorem}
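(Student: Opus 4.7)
The theorem asserts a set equality, so the plan is to prove the two inclusions separately.

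For the easy inclusion $\bar\EEE_P\EEE_Q\subseteq\EEE_T$, suppose $g\in\bar\EEE_P$ and $h\in\EEE_Q$. I would fix $\pi\in\Pi$ and use Tonelli together with the definition \eqref{eq:joint} of $T_\pi$ to iterate the integral as
\[
  \int g(\omega;\theta)h(\theta)\,T_\pi(\d\omega,\d\theta)
  =
  \int \biggl(\int g(\omega;\theta)\,P_\theta(\d\omega)\biggr) h(\theta)\,Q_\pi(\d\theta)
  \le \int h(\theta)\,Q_\pi(\d\theta)\le 1,
\]
where the first inequality uses $g\in\bar\EEE_P$ pointwise in $\theta$ and the second uses $h\in\EEE_Q$. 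Since $\pi$ was arbitrary, $gh\in\EEE_T$, and replacing $gh$ by any $f$ that agrees with $gh$ on a $T_\pi$-conull set for every $\pi$ does not change the integrals.

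For the nontrivial inclusion $\EEE_T\subseteq\bar\EEE_P\EEE_Q$, given $f\in\EEE_T$ I would define the candidate factors by the natural ``conditional expectation'' split,
\[
  h(\theta):=\int f(\omega,\theta)\,P_\theta(\d\omega),
  \qquad
  g(\omega;\theta):=
  \begin{cases}
    f(\omega,\theta)/h(\theta) & \text{if }h(\theta)\in(0,\infty),\\
    0 & \text{if }h(\theta)\in\{0,\infty\}.
  \end{cases}
\]
Measurability of $h$ is standard for Markov kernels; measurability of $g$ then follows because $g$ is a ratio of measurable functions on the measurable set $\{0<h<\infty\}$ and zero elsewhere. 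Directly from the definition, $g\in\bar\EEE_P$: on $\{0<h<\infty\}$ the inner integral equals $1$, and on the complement $g\equiv 0$ in $\omega$. That $h\in\EEE_Q$ is precisely the Tonelli computation above read backwards: $\int h\,\d Q_\pi=\int f\,\d T_\pi\le 1$ for every $\pi\in\Pi$.

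The last step, and the only one requiring care, is checking $f=gh$ $T_\pi$-a.s.\ for every $\pi$. On the set $\{0<h<\infty\}$ the identity holds pointwise by construction. Since $h\in\EEE_Q$, we have $Q_\pi\{h=\infty\}=0$, hence by \eqref{eq:joint} the slab $\Omega\times\{h=\infty\}$ is $T_\pi$-null, so the behaviour of $f$ and $gh$ there is irrelevant (with the convention $0\cdot\infty=0$). On the slab $\Omega\times\{h=0\}$ we have $gh=0$, and $h(\theta)=0$ forces $f(\cdot,\theta)=0$ $P_\theta$-a.s., so integrating against $Q_\pi$ over $\{h=0\}$ shows $f=0$ $T_\pi$-a.s.\ on this slab as well. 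Combining the three regions gives $f=gh$ $T_\pi$-a.s.\ for every $\pi$, which is the definition of membership in the product $\bar\EEE_P\EEE_Q$. The main (minor) obstacle is precisely this careful handling of the exceptional sets $\{h=0\}$ and $\{h=\infty\}$; everything else is Tonelli plus definitions.
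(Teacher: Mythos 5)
Your proof is correct and follows essentially the same route as the paper's: Tonelli for the easy inclusion, and the conditional-expectation factorization $h(\theta)=\int f\,\d P_\theta$, $g=f/h$ for the hard one. The only difference is that you spell out the handling of the exceptional sets $\{h=0\}$ and $\{h=\infty\}$ more explicitly than the paper, which simply adopts the convention $0/0:=0$ and asserts the $T_\pi$-a.s.\ identity $f=gh$.
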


\begin{proof}
  The inclusion ``$\supseteq$'' in~\eqref{eq:VV-1}
  follows from the definition of $T$:
  if $g\in\bar\EEE_P$ and $h\in\EEE_Q$,
  we have, for all $\pi\in\Pi$,
  \begin{align*}
    \int_{\Omega\times\Theta}
    g(\omega;\theta)
    h(\theta)
    T_{\pi}(\d\omega,\d\theta)
    &=
    \int_{\Theta}
    \int_{\Omega}
    g(\omega;\theta)
    P_\theta(\d\omega)
    h(\theta)
    Q_{\pi}(\d\theta)\\
    &\le
    \int_{\Theta}
    h(\theta)
    Q_{\pi}(\d\theta)
    \le
    1.
  \end{align*}

  To check the inclusion ``$\subseteq$'' in~\eqref{eq:VV-1},
  let $f\in\EEE_T$.
  Define $h:\Theta\to[0,\infty]$ and $g:\Omega\times\Theta\to[0,\infty]$ by
  \begin{align*}
    h(\theta)
    &:=
    \int f(\omega,\theta) P_\theta(\d\omega)\\
    g(\omega;\theta)
    &:=
    f(\omega,\theta) / h(\theta)
  \end{align*}
  (setting, e.g., $0/0:=0$ in the last fraction).
  Since by definition,
  $f(\omega,\theta) = g(\omega;\theta)h(\theta)$ $T_{\pi}$-a.s.,
  it suffices to check that $h\in\EEE_Q$ and $g\in\bar\EEE_P$.
  The inclusion $h\in\EEE_Q$ follows
  from the fact that, for any $\pi\in\Pi$,
  \[
    \int_{\Theta} h(\theta) Q_{\pi}(\d\theta)
    =
    \int_{\Theta}
    \int_{\Omega} f(\omega,\theta) P_\theta(\d\omega)
    Q_{\pi}(\d\theta)
    =
    \int_{\Omega\times\Theta}
    f(\omega,\theta)
    T_{\pi}(\d\omega,\d\theta)
    \le
    1.
  \]
  And the inclusion $g\in\bar\EEE_P$ follows
  from the fact that, for any $\theta\in\Theta$,
  \[
    \int g(\omega;\theta) P_{\theta}(\d\omega)
    =
    \int \frac{f(\omega,\theta)}{h(\theta)} P_{\theta}(\d\omega)
    =
    \frac{\int f(\omega,\theta) P_{\theta}(\d\omega)}{h(\theta)}
    =
    \frac{h(\theta)}{h(\theta)}
    \le
    1
  \]
  (we have ${}\le1$ rather than ${}=1$ because of the possibility $h(\theta)=0$).
\end{proof}

\subsection*{IID vs exchangeability}

De Finetti's theorem (see, e.g., \cite[Theorem~1.49]{Schervish:1995})
establishes a close connection between IID and exchangeability
for infinite sequences in $\mathbf{Z}^{\infty}$,
where $\mathbf{Z}$ is a Borel measurable space:
namely, the exchangeable probability measures are the convex mixtures of the IID probability measures
(in particular, their upper envelopes, and therefore, e- and p-functions, coincide).
This subsection discusses a somewhat less close connection
in the case of sequences of a fixed finite length.

Fix $N\in\{1,2,\dots\}$ (time horizon),
and let $\Omega:=\mathbf{Z}^N$ be the set of all sequences of elements of $\mathbf{Z}$
(a measurable space, not necessarily Borel) of length $N$.
An \emph{IID probability measure} on $\Omega$ is a measure of the type $Q^N$,
where $Q$ is a probability measure on $\mathbf{Z}$.
The \emph{configuration} $\conf(\omega)$ of a sequence $\omega\in\Omega$
is the multiset of all elements of $\omega$,
and a \emph{configuration measure} is the pushforward of an IID probability measure on $\Omega$
under the mapping $\conf$.
Therefore, a configuration measure is a measure on the set of all multisets in $\mathbf{Z}$ of size $N$
(with the natural quotient $\sigma$-algebra).

Let $\Eiid$ be the class of all e-functions w.r.\ to the family
of all IID probability measures on $\Omega$
and $\Econf$ be the class of all e-functions w.r.\ to the family
of all configuration probability measures.
Let $\Eexch$ be the class of all e-functions
w.r.\ to the family of all exchangeable probability measures on $\Omega$;
remember that a probability measure $P$ on $\Omega$ is \emph{exchangeable}
if, for any permutation $\pi:\{1,\dots,N\}\to\{1,\dots,N\}$
and any measurable set $E\subseteq\mathbf{Z}^N$,
\[
  P
  \left\{
    (z_1,\dots,z_N)
    \mid
    (z_{\pi(1)},\dots,z_{\pi(N)})
    \in
    E
  \right\}
  =
  P(E).
\]
The \emph{product} $\Eexch\Econf$ of $\Eexch$ and $\Econf$
is the set of all measurable functions $f:\Omega\to[0,\infty]$ such that,
for some $g\in\Eexch$ and $h\in\Econf$,
\[
  f(\omega)
  =
  g(\omega)h(\conf(\omega))
\]
holds for almost all $\omega\in\Omega$
(under any IID probability measure).

\begin{corollary}\label{cor:VV}
  It is true that
  \begin{equation*}
    \Eiid
    =
    \Eexch \Econf.
  \end{equation*}
\end{corollary}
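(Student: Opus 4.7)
The plan is to recognise Corollary~\ref{cor:VV} as a concrete instance of Theorem~\ref{thm:VV}, with the configuration $\conf(\omega)$ playing the role of a sufficient statistic as in Remark~\ref{rem:sufficient}. I would set up a para-Bayesian model by letting $\Pi$ be the space of probability measures on $\mathbf{Z}$ (equipped with any $\sigma$-algebra making $\pi\mapsto\pi^N(A)$ measurable for every measurable $A\subseteq\Omega$), $\Theta$ be the configuration space, $Q_\pi$ be the configuration measure associated with $\pi$, and $P_\theta$ be the uniform probability measure on the symmetric-group orbit $\{\omega\in\Omega:\conf(\omega)=\theta\}$. Each $P_\theta$ is exchangeable by construction, and a short computation from \eqref{eq:joint} shows that the joint statistical model $T_\pi$ is concentrated on the graph $\Gamma:=\{(\omega,\conf(\omega)):\omega\in\Omega\}$ and has marginal $\pi^N$ on $\Omega$.

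Using the measurable bijection $\Phi:\Omega\to\Gamma$, $\Phi(\omega):=(\omega,\conf(\omega))$, I would then transport the conclusion $\EEE_T=\bar\EEE_P\EEE_Q$ of Theorem~\ref{thm:VV} back to $\Omega$. Two of the three identifications are routine: $f\mapsto f\circ\Phi$ is a bijection between $\EEE_T$ and $\Eiid$ (surjectivity via the extension $f(\omega,\theta):=f'(\omega)$ of any $f'\in\Eiid$), and $\EEE_Q=\Econf$ is immediate since $Q_\pi$ is by definition the pushforward of $\pi^N$ under $\conf$. The ``$T_\pi$-a.s., for all $\pi$'' clause in the definition of $\bar\EEE_P\EEE_Q$ translates, via $\Phi$, into the IID-a.s.\ clause in the definition of $\Eexch\Econf$.

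The substantive identification is that of $\bar\EEE_P$, pulled back along $\Phi$, with $\Eexch$. Given $g\in\bar\EEE_P$, the candidate $\tilde g(\omega):=g(\omega;\conf(\omega))$ should belong to $\Eexch$ because, for an arbitrary exchangeable probability measure $P$ on $\Omega$, the regular conditional distribution of $\omega$ given $\conf$ is $P_\theta$ (by symmetry of $P$), so
\[
  \int_\Omega \tilde g(\omega) \, P(\d\omega)
  =
  \int_\Theta \left(\int_\Omega g(\omega;\theta)\, P_\theta(\d\omega)\right) (P\circ\conf^{-1})(\d\theta)
  \le 1.
\]
Conversely, any $\tilde g\in\Eexch$ lifts trivially to $g(\omega;\theta):=\tilde g(\omega)$ in $\bar\EEE_P$, since each $P_\theta$ is itself exchangeable. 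Combined with Theorem~\ref{thm:VV}, these identifications yield $\Eiid=\Eexch\Econf$.

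I expect the main obstacle to be the disintegration step in the forward direction: one must know that every exchangeable probability measure on $\mathbf{Z}^N$ conditions, given the configuration, to the orbit-uniform measure $P_\theta$. For Borel $\mathbf{Z}$ this is a standard application of regular conditional probabilities together with $P$'s symmetry; for a general measurable space $\mathbf{Z}$ one should bypass disintegration and verify the e-function inequality directly, using that the symmetric-group average of $\tilde g$ over each orbit is bounded by $1$ and then integrating this bound against $P\circ\conf^{-1}$.
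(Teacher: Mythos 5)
Your proposal is correct and follows the paper's own route: the paper proves Corollary~\ref{cor:VV} precisely by applying Theorem~\ref{thm:VV} to the para-Bayesian model in which $\Theta$ is the set of configurations, $P_\theta$ is the orbit-uniform measure (weight $1/N!$ per permutation of an ordering of $\theta$), $\Pi$ indexes the IID measures, and $Q_\pi$ is the pushforward under $\conf$. The identifications you spell out --- transporting along the graph of $\conf$ and matching $\bar\EEE_P$ with $\Eexch$ via the symmetric-group averaging argument --- are exactly the details the paper leaves implicit.
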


\begin{proof}
  It suffices to apply Theorem~\ref{thm:VV} in the situation
  where $\Theta$ is the set of all configurations,
  $P_{\theta}$ is the probability measure on $\mathbf{Z}^N$
  concentrated on the set of all sequences with the configuration $\theta$
  and uniform on that set
  (we can order $\theta$ arbitrarily,
  and then $P_{\theta}$ assigns weight $1/N!$ to each permutation
  of that ordering),
  $\Pi$ is the set of all IID probability measures on $\Omega$,
  and $Q_{\pi}$ is the pushforward of $\pi\in\Pi$ w.r.\ to the mapping $\conf$.
\end{proof}

\section{Bernoulli sequences: IID vs exchangeability}
\label{sec:Kolmogorov}

In this section we apply the definitions and results of the previous sections
to the problem of defining Bernoulli sequences.
Kolmogorov's main publications on this topic are \cite{Kolmogorov:1968} and \cite{Kolmogorov:1983}.
The results of this section will be algorithm-free versions of the results in \cite{Vovk:1986-arXiv}
(also described in V'yugin's review \cite{Vyugin:1999}, Sections 11--13).

The definitions of the previous subsection simplify as follows.
Now $\Omega:=\{0,1\}^N$ is the set of all binary sequences of length $N$.
Let $\EBern$ be the class of all e-functions w.r.\ to the family
of all Bernoulli IID probability measures on $\Omega$
(this is a special case of $\Eiid$)
and $\Ebin$ be the class of all e-functions w.r.\ to the family
of all binomial probability measures on $\{0,\dots,N\}$
(this is a special case of $\Econf$);
remember that the Bernoulli measure $B_p$ with parameter $p\in[0,1]$
is defined by $B_p(\{\omega\}):=p^k(1-p)^{N-k}$,
where $k:=+\omega$ is the number of 1s in $\omega$,
and the binomial measure $\bin_p$ with parameter $p\in[0,1]$
is defined by $\bin_p(\{k\}):=\binom{N}{k}p^k(1-p)^{N-k}$.
(The notation $+\omega$ for the number $k$ of 1s in $\omega$ is motivated
by $k$ being the sum of the elements of $\omega$.)

We continue to use the notation $\Eexch$ for the class of all e-functions
w.r.\ to the family of all exchangeable probability measures on $\Omega$;
a probability measure $P$ on $\Omega$ is exchangeable
if and only if $P(\{\omega\})$ depends on $\omega$ only via $+\omega$.
It is clear that a function $f:\Omega\to[0,\infty]$ is in $\Eexch$
if and only if, for each $k\in\{0,\dots,N\}$,
\[
  \binom{N}{k}^{-1}
  \sum_{\omega\in\Omega:+\omega=k}
  f(\omega)
  \le
  1.
\]
The product $\Eexch\Ebin$ of $\Eexch$ and $\Ebin$
is the set of all functions $\omega\in\Omega\mapsto g(\omega)h(+\omega)$
for $g\in\Eexch$ and $h\in\Ebin$.
The following is a special case of Corollary~\ref{cor:VV}.

\begin{corollary}\label{cor:V-1}
  It is true that
  \begin{equation*}
    \EBern
    =
    \Eexch \Ebin.
  \end{equation*}
\end{corollary}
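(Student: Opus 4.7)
The plan is to obtain Corollary~\ref{cor:V-1} by specializing Corollary~\ref{cor:VV} to the binary alphabet $\mathbf{Z}=\{0,1\}$ and checking that, in this case, the three classes $\Eiid$, $\Econf$, and $\Eexch\Econf$ coincide with $\EBern$, $\Ebin$, and $\Eexch\Ebin$, respectively.

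First I would note that the IID probability measures on $\Omega=\{0,1\}^N$ are exactly the Bernoulli product measures $B_p^N$ for $p\in[0,1]$, since any probability measure $Q$ on $\{0,1\}$ is of the form $Q=\bin_p(\{1\})\delta_1+\bin_p(\{0\})\delta_0$ with $p=Q(\{1\})$. Hence the families of distributions coincide and $\Eiid=\EBern$.

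Next I would identify the set of configurations in $\{0,1\}^N$ with $\{0,1,\dots,N\}$ via the bijection sending a multiset to the number of its $1$s. Under this identification the configuration map $\conf$ becomes the map $\omega\mapsto+\omega$, and the pushforward of $B_p^N$ under $\conf$ is the binomial measure $\bin_p$ on $\{0,\dots,N\}$; so the class of configuration measures coincides with the binomial family, and therefore $\Econf=\Ebin$. Likewise, the product $\Eexch\Econf$ becomes $\Eexch\Ebin$, because a function $h$ on configurations is just a function of $+\omega$, and the almost-sure clause in the definition of the product carries over verbatim (every Bernoulli product measure is IID and vice versa).

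Having checked these identifications, the desired equality $\EBern=\Eexch\Ebin$ is exactly the equality $\Eiid=\Eexch\Econf$ from Corollary~\ref{cor:VV}. The only potentially subtle point is the ``almost all $\omega$ under any IID probability measure'' clause in the definition of the product: but since the Bernoulli family in the binary case is literally the IID family, no rewording is needed, and this causes no obstacle. Thus the proof reduces to a short paragraph recording the identifications above and invoking Corollary~\ref{cor:VV}.
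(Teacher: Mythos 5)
Your proposal is correct and matches the paper's route exactly: the paper presents Corollary~\ref{cor:V-1} simply as the special case of Corollary~\ref{cor:VV} for $\mathbf{Z}=\{0,1\}$, having already noted in the definitions that $\EBern$ and $\Ebin$ are the binary instances of $\Eiid$ and $\Econf$. Your explicit verification of the identifications (configurations of a binary sequence correspond to the count $+\omega$ of 1s, pushforwards of Bernoulli product measures are binomial, and the almost-sure clause transfers verbatim) is exactly the bookkeeping the paper leaves implicit.
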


The intuition behind Corollary~\ref{cor:V-1}
is that a sequence $\omega\in\Omega$ is Bernoulli
if and only if it is exchangeable and the number of 1s in it is binomial.
The analogue of Corollary~\ref{cor:V-1} in the algorithmic theory of randomness
is Theorem~1 in \cite{Vovk:1986-arXiv},
which says, using the terminology of that paper,
that the Bernoulliness deficiency of $\omega$
equals the binomiality deficiency of $+\omega$
plus the conditional randomness deficiency of $\omega$
in the set of all sequences in $\{0,1\}^N$ with $+\omega$ 1s
given the binomiality deficiency of $+\omega$.
Corollary~\ref{cor:V-1} is simpler since it does not involve any analogue of the condition
``given the binomiality deficiency of $+\omega$''.
Theorem~1 of \cite{Vovk:1986-arXiv} was generalized to the non-binary case
in \cite{Vovk/etal:1999}
(Theorem~3 of \cite{Vovk/etal:1999}, given without a proof, is an algorithmic analogue of Corollary~\ref{cor:VV}).

\begin{remark}
  Kolmogorov's definition of Bernoulli sequences is via exchangeability.
  We can regard this definition as an approximation to definitions taking into account the binomiality of the number of 1s.
  In the paper \cite{Kolmogorov:1968} Kolmogorov uses the word ``approximately''
  when introducing his notion of Bernoulliness
  (p.~663, lines 5--6 after the 4th displayed equation).
  However, it would be wrong to assume that here he acknowledges
  disregarding the requirement that the number of 1s should be binomial;
  this is not what he meant when he used the word ``approximately''~\cite{Kolmogorov:1983pers}.
\end{remark}

The reason for Kolmogorov's definition of Bernoulliness
being different from the definitions based on e-values and p-values
is that $+\omega$ carries too much information about $\omega$;
intuitively \cite{Vovk:1997}, $+\omega$ contains not only useful information
about the probability $p$ of 1 but also noise.
To reduce the amount of noise,
we will use an imperfect estimator of $p$.
Set
\begin{equation}\label{eq:partition}
  p(a)
  :=
  \sin^2
  \left(
    a N^{-1/2}
  \right),
  \quad
  a=1,\dots,N^*-1,
  \quad
  N^*
  :=
  \left\lfloor
    \frac{\pi}{2}N^{1/2}
  \right\rfloor,
\end{equation}
where $\lfloor\cdot\rfloor$ stands for integer part.
Let $E:\{0,\dots,N\}\to[0,1]$ be the estimator of $p$ defined by
$
  E(k)
  :=
  p(a)
$,
where $p(a)$ is the element of the set \eqref{eq:partition} that is nearest to $k/N$
among those satisfying $p(a)\le k/N$;
if such elements do not exist, set $E(k):=p(1)$.

Denote by $\mathfrak{A}$ the partition of the set $\{0,\dots,N\}$ into the subsets
$E^{-1}(E(k))$,
where $k\in\{0,\dots,N\}$.
For any $k\in\{0,\dots,N\}$,
$\mathfrak{A}(k):=E^{-1}(E(k))$ denotes the element of the partition $\mathfrak{A}$ containing $k$.
Let $\Esin$ be the class of all e-functions
w.r.\ to the statistical model $\{U_k\mid k\in\{0,\dots,N\}\}$,
$U_k$ being the uniform probability measure on $\mathfrak{A}(k)$.
(This is a Kolmogorov-type statistical model,
consisting of uniform probability measures on finite sets;
see, e.g., \cite[Section~4]{Vovk/Shafer:2003}.)

\begin{theorem}\label{thm:V-2}
  For some universal constant $c>0$,
  \begin{equation*}
    c^{-1}
    \Esin
    \subseteq
    \Ebin
    \subseteq
    c \Esin.
  \end{equation*}
\end{theorem}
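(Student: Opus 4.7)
The plan is to reduce the theorem to two uniform quantitative estimates about $\bin_p$ on the cells of $\mathfrak{A}$. The argument rests on the fact that the grid $p(a)=\sin^2(aN^{-1/2})$ is variance-stabilizing for the binomial: under the Fisher transform $\psi(p):=\arcsin\sqrt{p}$, the grid is equispaced with step $N^{-1/2}$, which matches (up to a factor $1/2$) the asymptotic standard deviation of $\psi(K/N)$ when $K\sim\bin_p$. Using $\sin^2x-\sin^2y=\sin(x-y)\sin(x+y)$ one finds
\[
  |A_a|=N\bigl(p(a+1)-p(a)\bigr)+O(1)=\sqrt{N}\sin(N^{-1/2})\sin\bigl((2a+1)N^{-1/2}\bigr)+O(1)\asymp\sigma(p(a)),
\]
where $\sigma(p):=\sqrt{Np(1-p)}$, i.e.\ each cell width is of order the local binomial standard deviation.

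\emph{Lemma B}: there is a universal $C_B>0$ such that $\min_{k\in A}\bin_{p_0}(\{k\})\ge C_B/|A|$ for every cell $A=\mathfrak{A}(k_0)$ with $p_0:=E(k_0)$. Every $k\in A$ satisfies $0\le k-Np_0<|A|=O(\sigma(p_0))$, so $k$ is within $O(1)$ standard deviations of the mode of $\bin_{p_0}$ and Stirling's formula gives $\bin_{p_0}(\{k\})\asymp 1/\sigma(p_0)\asymp 1/|A|$; the two boundary cells (with $Np_0=O(1)$) are handled by direct Poisson-type estimates. Given Lemma~B, if $f\in\Ebin$ then the defining inequality at $p=p_0$ gives
\[
  1\ge\sum_k\bin_{p_0}(\{k\})f(k)\ge\min_{k\in A}\bin_{p_0}(\{k\})\sum_{k\in A}f(k)\ge\frac{C_B}{|A|}\sum_{k\in A}f(k),
\]
so $f\in C_B^{-1}\Esin$.

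\emph{Lemma A}: there is a universal $C_A>0$ such that $\sum_{A\in\mathfrak{A}}|A|\max_{k\in A}\bin_p(\{k\})\le C_A$ for every $p\in[0,1]$. Given Lemma~A, if $f\in\Esin$ (so $\sum_{k\in A}f(k)\le|A|$), then for any $p$,
\[
  \sum_k\bin_p(\{k\})f(k)\le\sum_A\Bigl(\max_{k\in A}\bin_p(\{k\})\Bigr)\sum_{k\in A}f(k)\le\sum_A|A|\max_{k\in A}\bin_p(\{k\})\le C_A,
\]
so $f\in C_A\Ebin$, and the theorem follows with $c:=\max(C_A,C_B^{-1})$. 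To prove Lemma~A, combine the sharp Stirling estimate $\bin_p(\{k\})\asymp\sigma(q)^{-1}\exp(-N\mathrm{KL}(q\|p))$ (with $q:=k/N$) with the universal sub-Gaussian bound $\mathrm{KL}(q\|p)\ge(8/\pi^2)(\psi(q)-\psi(p))^2$; the latter follows from $\mathrm{KL}\ge 2d_H^2$, the Bernoulli Hellinger identity $d_H^2=2(1-\cos(\psi(q)-\psi(p)))$, and $1-\cos x\ge(2/\pi^2)x^2$ on $[0,\pi]$. Setting $a^*:=\sqrt{N}\psi(p)$ and $d:=a-\lfloor a^*\rfloor$, every $k\in A_a$ satisfies $|\psi(q)-\psi(p)|\ge(|d|-1)N^{-1/2}$, whence $\max_{k\in A_a}\bin_p(\{k\})\le C\sigma(p(a))^{-1}\exp(-c(|d|-1)^2)$; multiplying by $|A_a|\asymp\sigma(p(a))$ and summing over $d\in\mathbb{Z}$ yields a convergent Gaussian tail bounded by a universal $C_A$.

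The main obstacle is keeping all constants uniform over $p\in[0,1]$ and $N\ge 1$, particularly for $p$ or cell indices near the extremes $0$ and $N^*$ where the Stirling form degenerates. In such regimes one replaces Stirling by the explicit Poisson approximation $\bin_p(\{k\})\le e^{-\lambda}\lambda^k/k!$ with $\lambda=Np$ (and its symmetric analogue near $p=1$); boundary cells have $|A_a|=O(1)$, so even an $e^{-c|d|}$ geometric tail (instead of Gaussian) suffices for summability. Once the interior (Gaussian) and boundary (Poisson) regimes are combined, Lemmas~A and B yield the claimed universal constant $c$.
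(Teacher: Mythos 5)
The paper gives no proof of Theorem~\ref{thm:V-2} at all --- it only asserts that one ``can be extracted'' from Theorem~2 of \cite{Vovk:1986-arXiv} --- so there is nothing in-paper to compare your argument against, and it has to be judged on its own merits. On that basis your proposal is correct and essentially complete as a sketch. The reduction is exactly right: the inclusion $\Ebin\subseteq C_B^{-1}\Esin$ needs only that $f\in\Ebin$ be tested at the single parameter value $p(a)$ for each cell $A_a$, combined with your Lemma~B lower bound $\bin_{p(a)}(\{k\})\ge C_B/\left|A_a\right|$ on that cell; and the inclusion $C_A^{-1}\Esin\subseteq\Ebin$ follows from the cell-wise majorization once Lemma~A bounds $\sum_a\left|A_a\right|\max_{k\in A_a}\bin_p(\{k\})$ uniformly in $p$. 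Both lemmas are true, and the ingredients you invoke are the right ones: the factorization $\binom{N}{k}p^k(1-p)^{N-k}=\binom{N}{k}q^k(1-q)^{N-k}e^{-N\mathrm{KL}(q\|p)}$ with $q=k/N$, the two-sided Stirling estimate $\binom{N}{k}q^k(1-q)^{N-k}\asymp (Nq(1-q))^{-1/2}$ for $1\le k\le N-1$ (plus a $\chi^2$-type upper bound on $\mathrm{KL}$ for the lower bound in Lemma~B), and the Hellinger/arcsine lower bound on $\mathrm{KL}$ for the Gaussian tail in Lemma~A. The variance-stabilization computation $\left|A_a\right|\asymp\sqrt{Np(a)(1-p(a))}$ is the whole reason the net \eqref{eq:partition} is chosen, and it is correct; one should verify the two-sided comparison survives near $a=N^*-1$, where both $\sin((2a+1)N^{-1/2})$ and $\sin(2aN^{-1/2})$ are small but their ratio remains bounded, and note that $Np(1)(1-p(1))$ is bounded below by a universal constant, so no cell has vanishing local variance. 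Your Hellinger constant $8/\pi^2$ is off by a factor of $2$ (the inequality $\mathrm{KL}\ge 2d_H^2$ is for the normalization $d_H^2=1-\cos(\psi(q)-\psi(p))$, not twice that), but only positivity of the exponent matters. You have correctly isolated the genuinely delicate points --- the endpoints $k\in\{0,N\}$, the extreme cells of size $O(1)$, and uniformity of all constants in $N$ and $p$ --- and the Poisson-regime fix you propose for them is the standard and workable one.
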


The analogue of Theorem~\ref{thm:V-2} in the algorithmic theory of randomness
is Theorem~2 in \cite{Vovk:1986-arXiv},
and the proof of Theorem~\ref{thm:V-2} can be extracted
from that of Theorem~2 in \cite{Vovk:1986-arXiv} (details omitted).

\begin{remark}
  Paper \cite{Vovk:1986-arXiv} uses a net slightly different from \eqref{eq:partition};
  \eqref{eq:partition} was introduced in \cite{Vovk:1997} and also used in \cite{Vyugin:1999}.
\end{remark}

In conclusion of this section,
let us extract corollaries in terms of p-values
from Corollary~\ref{cor:V-1} and Theorem~\ref{thm:V-2};
we will use the obvious notation 
$\PBern$, $\Pexch$, and $\Pbin$.

\begin{corollary}\label{cor:V-1-p}
  For each $\kappa\in(0,1)$,
  \begin{equation}\label{eq:V-1-p}
    \kappa^{-1} \PBern^{1-\kappa}
    \subseteq
    \Pexch \Pbin
    \subseteq
    \kappa^{\frac{2}{1-\kappa}} \PBern^{\frac{1}{1-\kappa}}.
  \end{equation}
\end{corollary}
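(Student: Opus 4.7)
The plan is to follow essentially the same argument as in the proof of Corollary~\ref{cor:VV-1}, but using Corollary~\ref{cor:V-1} (in place of Proposition~\ref{prop:VV-1}) to factor $\EBern$ as $\Eexch\Ebin$, and using Proposition~\ref{prop:p-vs-e-stat} (instead of Proposition~\ref{prop:p-vs-e-bar}) for all three classes $\EBern,\Eexch,\Ebin$, since here both factors are ordinary, unconditional e-function classes.

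First I would rewrite the double inclusion $\kappa\PPP^{\kappa-1}\subseteq\EEE\subseteq\PPP^{-1}$ from Proposition~\ref{prop:p-vs-e-stat} in the two convenient equivalent forms
\[
  \kappa^{-1}\PPP^{1-\kappa}\subseteq\EEE^{-1}\subseteq\PPP
  \qquad\text{and}\qquad
  \EEE^{-1}\subseteq\PPP\subseteq\kappa^{1/(1-\kappa)}\EEE^{-1/(1-\kappa)},
\]
and apply each form to the Bernoulli, exchangeable, and binomial statistical models.

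For the left inclusion in~\eqref{eq:V-1-p}, I would then chain, using Corollary~\ref{cor:V-1} at the middle equality,
\[
  \kappa^{-1}\PBern^{1-\kappa}
  \subseteq
  \EBern^{-1}
  =
  (\Eexch\Ebin)^{-1}
  =
  \Eexch^{-1}\Ebin^{-1}
  \subseteq
  \Pexch\Pbin.
\]
For the right inclusion, I would chain, again invoking Corollary~\ref{cor:V-1} at the middle equality,
\[
  \Pexch\Pbin
  \subseteq
  \kappa^{2/(1-\kappa)}(\Eexch\Ebin)^{-1/(1-\kappa)}
  =
  \kappa^{2/(1-\kappa)}\EBern^{-1/(1-\kappa)}
  \subseteq
  \kappa^{2/(1-\kappa)}\PBern^{1/(1-\kappa)}.
\]

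There is no real obstacle here; the only thing to verify is that the product operation on function classes introduced before Corollary~\ref{cor:V-1} interacts with pointwise reciprocals and powers in the expected way, i.e., $(\Eexch\Ebin)^{-r}=\Eexch^{-r}\Ebin^{-r}$ for any $r>0$. This is immediate from the defining formula $f(\omega)=g(\omega)h(+\omega)$ together with $(gh)^{-r}=g^{-r}h^{-r}$. Everything else in the argument is mechanical substitution, exactly analogous to Corollary~\ref{cor:VV-1}.
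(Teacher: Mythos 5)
Your proof is correct and follows essentially the same route as the paper's own argument: both rewrite Proposition~\ref{prop:p-vs-e-stat} in the two inverted forms and then chain through $\EBern^{-1}=\Eexch^{-1}\Ebin^{-1}$ via Corollary~\ref{cor:V-1}, exactly as in Corollary~\ref{cor:VV-1}. Your explicit check that the product class commutes with reciprocals and powers is a detail the paper leaves implicit, but it changes nothing of substance.
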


\begin{proof}
  Similarly to Corollary~\ref{cor:VV-1},
  the left inclusion of \eqref{eq:V-1-p} follows from
  \[
    \kappa^{-1} \PBern^{1-\kappa}
    \subseteq
    \EBern^{-1}
    =
    \Eexch^{-1} \Ebin^{-1}
    \subseteq
    \Pexch \Pbin,
  \]
  and the right inclusion of \eqref{eq:V-1-p} follows from
  \begin{equation*}
    \Pexch \Pbin
    \subseteq
    \kappa^{\frac{2}{1-\kappa}}
    (\Eexch \Ebin)^{-\frac{1}{1-\kappa}}
    =
    \kappa^{\frac{2}{1-\kappa}}
    \EBern^{-\frac{1}{1-\kappa}}
    \subseteq
    \kappa^{\frac{2}{1-\kappa}}
    \PBern^{\frac{1}{1-\kappa}}.
    \qedhere
  \end{equation*}
\end{proof}

\begin{corollary}\label{cor:V-2-p}
  There is a universal constant $c>0$ such that,
  for each $\kappa\in(0,0.9)$,
  \begin{equation}\label{eq:V-2-p}
    c
    \kappa^{-1} \Psin^{1-\kappa}
    \subseteq
    \Pbin
    \subseteq
    c^{-1}
    \kappa^{\frac{1}{1-\kappa}} \Psin^{\frac{1}{1-\kappa}}.
  \end{equation}
\end{corollary}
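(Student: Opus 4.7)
My plan is to mirror the proof of Corollary~\ref{cor:V-1-p} exactly, with Theorem~\ref{thm:V-2} playing the role that Corollary~\ref{cor:V-1} played there. The two corollaries have essentially the same structure: each translates an e-function identity into a p-function statement via the standard conversion of Proposition~\ref{prop:p-vs-e-stat}. The genuinely new ingredient here is that Theorem~\ref{thm:V-2} is a \emph{coarser} statement involving a multiplicative constant $c$, so I will need to track how this constant propagates through the translation.

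For the left inclusion I would write the chain
\[
  c \kappa^{-1} \Psin^{1-\kappa}
  \subseteq
  c \Esin^{-1}
  \subseteq
  \Ebin^{-1}
  \subseteq
  \Pbin,
\]
where the first and last inclusions are Proposition~\ref{prop:p-vs-e-stat} (in the rewritten forms $\kappa^{-1}\PPP^{1-\kappa} \subseteq \EEE^{-1}$ and $\EEE^{-1} \subseteq \PPP$), and the middle inclusion is obtained by inverting the relation $c^{-1}\Esin \subseteq \Ebin$ from Theorem~\ref{thm:V-2}.

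For the right inclusion I would dually write
\[
  \Pbin
  \subseteq
  \kappa^{\frac{1}{1-\kappa}} \Ebin^{-\frac{1}{1-\kappa}}
  \subseteq
  c^{-\frac{1}{1-\kappa}} \kappa^{\frac{1}{1-\kappa}} \Esin^{-\frac{1}{1-\kappa}}
  \subseteq
  c^{-\frac{1}{1-\kappa}} \kappa^{\frac{1}{1-\kappa}} \Psin^{\frac{1}{1-\kappa}}.
\]
Here the outer inclusions come from Proposition~\ref{prop:p-vs-e-stat} (in the forms $\PPP \subseteq \kappa^{\frac{1}{1-\kappa}}\EEE^{-\frac{1}{1-\kappa}}$ and $\EEE^{-\frac{1}{1-\kappa}} \subseteq \PPP^{\frac{1}{1-\kappa}}$), and the middle one comes from inverting the relation $\Ebin \subseteq c\Esin$ of Theorem~\ref{thm:V-2} and raising to the $\frac{1}{1-\kappa}$-th power.

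The main obstacle to overcome is the discrepancy between the natural constant $c^{-\frac{1}{1-\kappa}}$ produced by the right chain and the stated $c^{-1}$. This is precisely why the restriction $\kappa \in (0, 0.9)$ appears: it keeps $\frac{1}{1-\kappa} \leq 10$, so for $c \geq 1$ (assumed w.l.o.g.\ from Theorem~\ref{thm:V-2}) the factor $c^{-\frac{1}{1-\kappa}}$ lives in the bounded range $[c^{-10}, c^{-1}]$. By taking the universal constant of Corollary~\ref{cor:V-2-p} to be a sufficiently large power of the constant supplied by Theorem~\ref{thm:V-2}, the $\kappa$-dependence can be absorbed into a single universal $c$ that serves both inclusions.
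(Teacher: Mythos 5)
Your proof is correct and follows essentially the same route as the paper's: both translate Theorem~\ref{thm:V-2} into p-function form via the two rewritten versions of Proposition~\ref{prop:p-vs-e-stat}, exactly as in Corollary~\ref{cor:V-1-p}. The paper simply writes $c^{-1}$ in the right-hand chain under the convention that ``$c$ stands for a positive universal constant,'' whereas you make the absorption of $c^{-\frac{1}{1-\kappa}}$ explicit and correctly identify the restriction $\kappa\in(0,0.9)$ as what bounds the exponent $\frac{1}{1-\kappa}$ and makes this absorption possible.
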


\begin{proof}
  As in the previous proof,
  the left inclusion of \eqref{eq:V-2-p} follows from
  \[
    \kappa^{-1} \Psin^{1-\kappa}
    \subseteq
    \Esin^{-1}
    \subseteq
    c^{-1}
    \Ebin^{-1}
    \subseteq
    c^{-1}
    \Pbin,
  \]
  and the right inclusion from
  \[
    \Pbin
    \subseteq
    \kappa^{\frac{1}{1-\kappa}}
    \Ebin^{-\frac{1}{1-\kappa}}
    \subseteq
    c^{-1}
    \kappa^{\frac{1}{1-\kappa}}
    \Esin^{-\frac{1}{1-\kappa}}
    \subseteq
    c^{-1}
    \kappa^{\frac{1}{1-\kappa}}
    \Psin^{\frac{1}{1-\kappa}},
  \]
  where $c$ stands for a positive universal constant.
\end{proof}

\section{Conclusion}

In this section we discuss some directions of further research.
A major advantage of the non-algorithmic approach to randomness proposed in this paper
is the absence of unspecified constants;
in principle, all constants can be computed.
The most obvious open problem is to find the best constant $c$ in Theorem~\ref{thm:V-2}.

In Section~\ref{sec:Kolmogorov} we discussed a possible implementation
of Kolmogorov's idea of defining Bernoulli sequences.
However, Kolmogorov's idea was part of a wider programme;
e.g., in \cite[Section 5]{Kolmogorov:1983} he sketches a way of applying a similar approach to Markov sequences.
For other possible applications,
see \cite[Section~4]{Vovk/Shafer:2003}
(most of these applications were mentioned by Kolmogorov in his papers and talks).
Analogues of Corollary~\ref{cor:V-1} in Section~\ref{sec:Kolmogorov}
can be established for these other applications
(cf.\ \cite{Lauritzen:1988} and Remark~\ref{rem:sufficient}),
but it is not obvious whether Theorem~\ref{thm:V-2} can be extended
in a similar way.

\end{document}